\theoremstyle{plain}
\theoremstyle{theorem}
\newtheorem{thm}{Theorem}
\begin{document}


\title
{Least Squares Finite Element Method for Hepatic Sinusoidal Blood Flow}


\author{F. Bertrand, L. Lambers, T. Ricken}
%

\maketitle                   
\begin{abstract}
The simulation of complex biological systems such as the description of blood flow in organs requires a lot of computational power as well as a detailed description of the organ physiology. We present a 
novel Least-Squares discretization method
for the simulation of sinusoidal blood flow in liver lobules using a porous medium approach for the liver tissue. The scaling of the different Least-Squares terms leads to a robust algorithm and the inherent error estimator provides an efficient refinement strategy.
\end{abstract}
\newcommand{\bv}{\mathbf v}
\newcommand{\fluidvelocity}{\mathbf{v}_{f}}
\newcommand{\hydrostaticpressure}{{p}_{f}}
\newcommand{\fluidpressure}{{p}_{F}}
\newcommand{\filtervelocity}{\mathbf{v}_{p}}
\renewcommand{\filtervelocity}{(n^F\mathbf{x}_{F}^{\prime})}
\newcommand{\shearstress}{\boldsymbol \tau}


\section{Introduction}
Modeling the hepatic blood perfusion is challenging \cite{Christ2010} but a crucial part for the understanding of the complex microcirculation processes and the development of treatment of possible liver diseases. Performing large scale simulations of the microcirculation arises in applications of paramount importance involving the supply of hepatocytes (liver cells) with oxygen, nutrients or pharmaceuticals. 
However, a fully spatially resolved model of the complex microcirculation of the hepatic lobule, the functional unit of the liver, on a very fine mesh would exceed the limits of current computational power and time. 
An inherent error estimator as in the Least-Squares method then constitutes a major advantage. 
The continuum biomechanical model is based on \cite{Ricken2010} and \cite{Ricken2015}, where a  multicomponent, multiscale and multiphase homogenization model in the framework of the Theory of Porous Media (TPM) for blood micro-circulation in hepatic lobules is presented. Taking into account the porous structure and the hexagonal shape with inflow at the outer edges (portal triad) and an outflow at the center (central vein), cf. Figure \ref{fig:5}c, a Least-Squares approach with suitable boundary conditions is validated using a single liver lobule. Least-Squares finite element methods are an attractive class of methods for the numerical solution of partial differential equations, as they produce symmetric and positive definite discrete systems, possess an inherent error estimator and are relatively easy to apply on nonlinear systems.
We refer to \cite{LS} for a comprehensive overview.

\section{Modeling Hepatic Lobular Sinusoidal Blood Flow}
\label{modeling}
The multi-scale structure of the liver consists of cells, lobules, segments, and lobes. The liver cells (hepatocytes) are arranged along so-called sinusoids. Sinusoids are capillary-like small blood vessels in the liver and form the liver lobules, the functional unit of the liver. The liver structure is a highly complex structure that can be described as a porous medium similar to a capillary bed as in \cite{Koch2014CouplingAV}.  
There a porous medium is typically a multiphase system where a mixture $\varphi = \cup_{\alpha \in \{ S, F\} }  $ with a solid phase $\varphi^S$ and the pore fluid $\varphi^F$ {is assumed}.\begin{figure}[h]
\begin{minipage}{65mm}
\vspace{-8cm}
\hspace{0.38cm} \includegraphics[width=45mm]{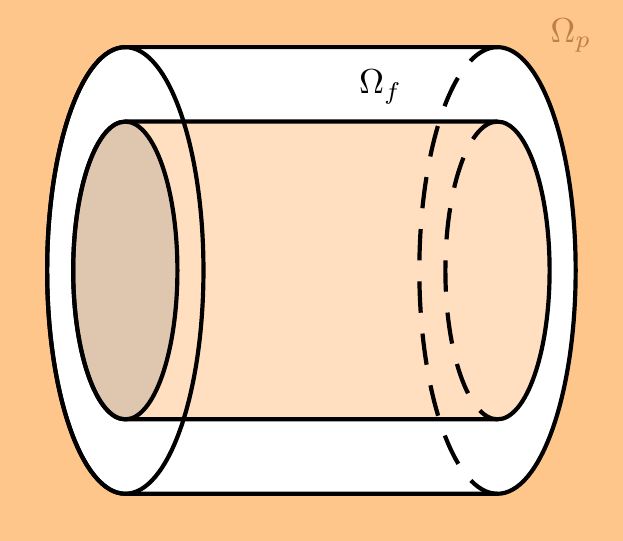}~a)
\\\noindent\includegraphics[width=55mm]{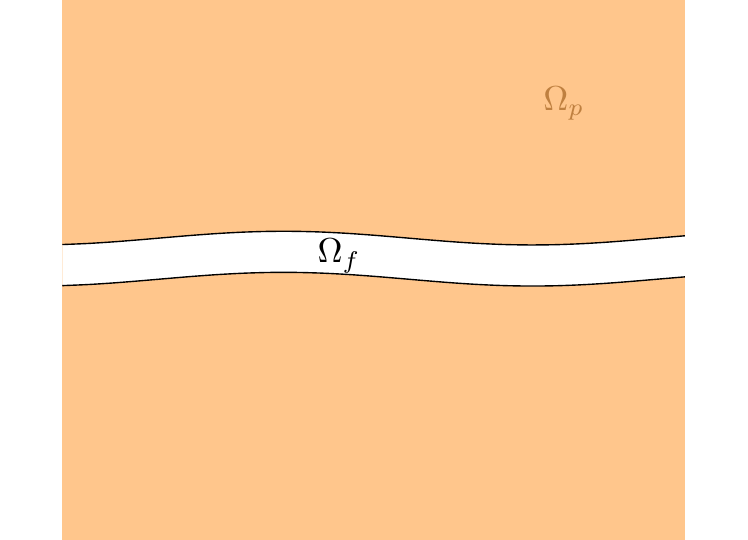}~\hspace*{-0.4cm}b)
\end{minipage}
\hfil
\includegraphics[width=7cm]{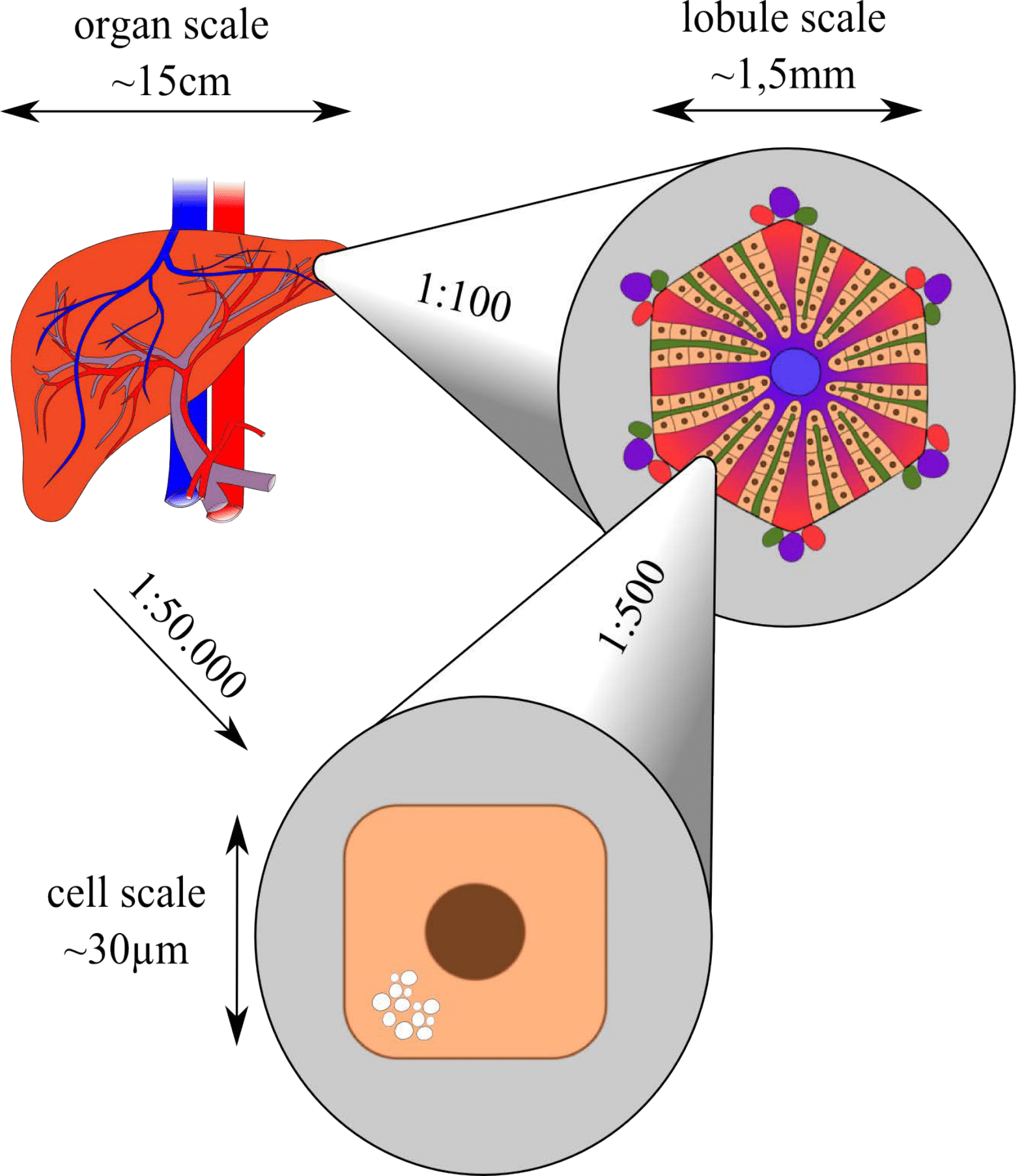}~c)
\caption{\textbf{a} and \textbf{b} : different views of free-flow domain $\Omega_f$ and porous domain $\Omega_p$ and \textbf{c} Different size scales of the human liver namely the organ scale including the macroscopic vascular, the porous lobule scale with a hexagonal shape and blood flow through the sinusoids and the cell scale.}
\label{fig:5}
\end{figure}

Each constituent $\alpha\in \{ S, F\} $ of the mixture is described by an individual motion function $\chi_\alpha$ and velocity $\bf x_\alpha^\prime$ and has
a partial density $\rho^\alpha$. 
 The local composition of the mixture is described by partial volumes $V^\alpha$ and volume fractions $n^\alpha$.
In our transvascular model we have constant solidity $n^S$ and the porosity $n^F$, as well as 
$
\sum_{\alpha} n^{\alpha}=1
$ and \(\rho=\sum_{\alpha} \rho^{\alpha}\).
The mass balance of a constituent $\alpha$ reads
\(\frac{\partial \rho^{\alpha}}{\partial t}+\nabla \cdot\left(\rho^{\alpha} \mathbf{x}_{\alpha}^{\prime}\right)=\hat{\rho}^{\alpha}=0\) where $\hat{\rho}^{\alpha}$ is a production term that accounts for interaction with the other constituents. Since here the two  constituents are immiscible, $\hat{\rho}^{\alpha}$ vanishes. We further assume a rigid body motion as well as a quasi static description with $\textbf x^{\prime \prime}_{\alpha} = \textbf 0$. 
The balance of momentum for the constituent $\alpha$ reads
\begin{align}
\rho^{\alpha}\left(\frac{\partial \mathbf{x}_{\alpha}^\prime}{\partial t}+\mathbf{x}_{\alpha}^{\prime} \cdot \nabla \mathbf{x}_{\alpha}^{\prime}\right)=\nabla \cdot \mathbf{T}^{\alpha}+\rho^{\alpha} \mathbf{f}^{\alpha}+
\hat{\textbf{p}}^{\alpha}+
\hat{\rho}^{\alpha}\mathbf{x}_{\alpha}^{\prime}\end{align}
where $\hat{\textbf{p}}^{\alpha}$ accounts for the momentum production by interaction with other constituents. The stress tensor for a general fluid phase is given by 
\(\mathbf{T}^{F}=\mathbf{T}_{\mu}^{F}-n^F p_F {\bf I}=2 \mu_{F} \mathbf{D}_{F}+\lambda\left(\mathbf{D}_{F} \cdot {\bf I}\right) {\bf I}-n^F p_F {\bf I}\)
with the second Lamé constant $\lambda$.
The fluid in the porous medium follows the constitutive law and thus is given by
$\hat{\textbf{p}}^F =  p_F \nabla n^F-{n^F}^{2} \mu^{F} \mathbf{K}^{-1}\left(\mathbf{x}_{F}^{\prime}-\mathbf{x}_{S}^{\prime}\right),$ with the fluid pressure $p_F$, $n^F$ denotes the porosity, $\mu^F$ the dynamic viscosity of the interstitial fluid and $K$ the permeability of the porous medium. The mass balance of the interstitial fluid reduces to $\nabla \dot  {(n^F \mathbf{x}^{\prime}_{F})}=0$. Thus, the momentum balance for the interstitial fluid leads to
$
{(n^F \mathbf{x}^{\prime}_{F})}=-\frac{\mathbf{K}}{\mu^{F}} \nabla p_F \ .
$
The domain representing the porous capillary bed is denoted by $\Omega_p$, while the domain $\Omega_f$ represents the blood vessel. Moreover, we denote $\Omega= \Omega_f \cup \Omega_p$ and $\Gamma = \Omega_f \cap \Omega_p$. 
The blood in the sinusoid is a mixture of several components and its  behavior is generally non-Newtonian but can be assumed to behave as a Newtonian fluid. Moreover, as the linearisation of Least-Squares methods is mostly straight-forward, we assume that the shear stress tensor $\shearstress$ is given by $\shearstress = 2\mu \mathbf{D}(\mathbf{v}_f)$ such that we obtain
\begin{equation}
 -2 \mu \nabla \cdot \mathbf{D}(\mathbf{v}_f)+\nabla p_f =0 \qquad\qquad \nabla \cdot \mathbf{v}_f =0 \qquad \qquad 
\mathbf{D}(\mathbf{v}_f)=\frac{1}{2}\left(\nabla \mathbf{v}_f+\nabla^{T} \mathbf{v}_f\right)
\text{ in } \Omega_f \ 
\end{equation}
with the hydrostatic pressure $p_f$. It remains to find interface conditions to couple the domain $\Omega_f$ and $\Omega_p$ representing a selective permeable membrane and leading to the well-posedness of the problem. Following \cite{Koch2014CouplingAV} we describe the vessel wall without spatially resolving it : we neglect the slip velocity $\fluidvelocity\cdot \boldsymbol{\tau} = 0$, consider the continuity of the normal velocity such that $\fluidvelocity\cdot \mathbf{n}=\filtervelocity \cdot \mathbf{n} $ and the Beavers-Joseph-Saffman condition $$ -2 \mu \mathbf{D}\left(\fluidvelocity\right) \mathbf{n} \cdot \mathbf{n}+\hydrostaticpressure=\frac{\mu_{F} \varepsilon}{K_{{M}}} \fluidvelocity \cdot \mathbf{n}+\fluidpressure$$
with the thickness of the vessel wall / sinusoid  
$\varepsilon$ and the intrinsic permeability of the capillaries $K_M$. We obtain the following coupled system.
\begin{equation}
\label{system}
\begin{aligned}
-2 \mu \nabla \cdot \mathbf{D}\left(\fluidvelocity\right)+\nabla \hydrostaticpressure=0 
&\quad \text{ in } \Omega_{f} \\
 -\nabla \cdot \fluidvelocity=0 
 &\quad \text{ in }  \Omega_{f} 
 \qquad & \fluidvelocity\cdot \mathbf{n}=\filtervelocity \cdot \mathbf{n} 
 &\quad \text{ on }  \Gamma 
 \\ 
 \frac{\mu_F}{K} \filtervelocity+\nabla \fluidpressure=0 
 &\quad \text{ in }  \Omega_{p} 
 \qquad &  -2 \mu \mathbf{D}\left(\fluidvelocity\right) \mathbf{n} \cdot \mathbf{n}+\hydrostaticpressure=\frac{\mu_{F} \varepsilon}{K_{\mathcal{M}}} \fluidvelocity \cdot \mathbf{n}+\fluidpressure &\quad\text{ on }  \Gamma 
 \\ 
 -\nabla \cdot \filtervelocity=0 &\quad \text{ in } \Omega_{p} 
 & \qquad 
\fluidvelocity\cdot \boldsymbol{\tau}=0 & \quad\text{ on }  \Gamma
\end{aligned}
\end{equation}
This is in fact a Darcy- Stokes system as reviewed in \cite{Discacciati}. Although for the Least-Squares method this is similar to \cite{steffendarcyflow2015}, the additional coupling term $\frac{\mu_{i} \varepsilon}{K_{\mathcal{M}}} \fluidvelocity \cdot \mathbf{n}$ has to be taken into account. Moreover, the different scaling require special care.


\section{Least Squares Finite Element Method}
In order to apply a Least-Squares Method to a first-order system corresponding to the system \eqref{system}, we introduce the stress tensor $\boldsymbol \sigma_f = 2\mu\mathbf{D}\left(\fluidvelocity\right)- \hydrostaticpressure \mathbf I$. Therefore, the first equation in \eqref{system} becomes $\nabla \cdot \boldsymbol \sigma_f = 0$. Moreover, with the deviator operator \(\operatorname{dev} \boldsymbol{\sigma}=\boldsymbol{\sigma}-(1/2)(\operatorname{tr} \boldsymbol{\sigma}) \mathbf{I}\) the definition of the stress tensor becomes $\operatorname{dev} \boldsymbol{\sigma}_f = 2 \mu  \mathbf{D}\left(\fluidvelocity\right)$ due to the incompressibility of $\fluidvelocity$. The incompressibility of $\fluidvelocity$ also implies that the hydrostatic pressure is directly related to the trace of the stress tensor : $\fluidpressure =-(1/2)(\operatorname{tr} \boldsymbol{\sigma}_f)$. 
\renewcommand{\hydrostaticpressure}{-(1/2)(\operatorname{tr} \boldsymbol{\sigma}_f)}In fact, both equations are sufficient to ensure the well-posedness of the method, see e.g. \cite{bertrandinterface}. Finally, we replace the filter velocity $\filtervelocity$ 
by\renewcommand{\filtervelocity}{\bv_p}
\newcommand{\stress}{\boldsymbol{\sigma}_f}
$\bv_p$. The first-order system therefore reads
\begin{equation}
\label{system2}
\begin{aligned}
 -\nabla \cdot \stress=0 &\quad \text{ in } \Omega_{f}   \qquad & 
\operatorname{dev} \stress = -2 \mu \nabla \cdot \mathbf{D}\left(\fluidvelocity\right)
&\quad \text{ in } \Omega_{f} \\
 -\nabla \cdot \fluidvelocity=0 
 &\quad \text{ in }  \Omega_{f} 
 \qquad & \fluidvelocity\cdot \mathbf{n}=\filtervelocity \cdot \mathbf{n} 
 &\quad \text{ on }  \Gamma 
 \\ 
 \frac{\mu_F}{K} \filtervelocity+\nabla \fluidpressure=0 
 &\quad \text{ in }  \Omega_{p} 
 \qquad &  -2 \mu \mathbf{D}\left(\fluidvelocity\right) \mathbf{n} \cdot \mathbf{n}-\frac 1 2 (\operatorname{tr} \boldsymbol{\sigma}_f)=\frac{\mu_{F} d_{M}}{K_{\mathcal{M}}} \fluidvelocity \cdot \mathbf{n}+\fluidpressure &\quad\text{ on }  \Gamma 
 \\ 
 -\nabla \cdot \filtervelocity=0 &\quad \text{ in } \Omega_{p} 
 & \qquad 
\fluidvelocity\cdot \boldsymbol{\tau}=0 & \quad\text{ on }  \Gamma
\end{aligned}
\end{equation}

We use the standard notation and definition for the Sobolev spaces $L^2(\Omega)$, $H^s(\Omega)$ for $s\geq 0$ and $$H(\operatorname{div} ; \Omega)=\left\{\boldsymbol{\tau} \in L^{2}(\Omega)^{d}: \nabla \cdot \boldsymbol{\tau} \in L^{2}(\Omega)\right\}.$$ We assume that the boundary of the domain $\Omega$ is split into a Dirichlet part $\Gamma_D$ and a Neumann 
part $\Gamma_N$ and write $\Omega_{F,D}$ for the intersection of $\Gamma_D$ with $\Omega_p$ where an effective pressure $p_f^0$ is imposed and  $\Omega_{f,D}$ for the intersection of $\Gamma_D$ with $\Omega_S$ where an effective pressure $p_F^0$ is imposed. In the numerical example, $p_f^0$ and $p_F^0$ are piecewise constants such that these conditions can be built directly in the finite element space. Therefore, the Least-Squares methods seeks $(\boldsymbol{\sigma}_f, {\bv}_f ,{\bv}_F,p_F )\in \mathbb H := H_{\Gamma_N}(\operatorname{div} ; \Omega_f)^2
\times H^1_{\Gamma_D}(\Omega_f)^2 
\times H_{\Gamma_N}(\operatorname{div} ; \Omega_p)
\times H^1_{\Gamma_D}(\Omega_p)$ such that the Least-Square Functional 
\begin{equation}\label{eq:lsF}\begin{aligned}
\mathcal F (\boldsymbol{\sigma}_f, {\bv}_f ,{\bv}_F,p_F )
=&
\| \nabla \cdot \stress\|_{\Omega_{f}} ^2 +
\| \operatorname{dev} \stress +2 \mu \nabla \cdot \mathbf{D}\left(\fluidvelocity\right)\|_{\Omega_{f}}^2 +
 \| \nabla \cdot \fluidvelocity\|_{\Omega_{f}} ^2
+ \|\nabla \cdot \filtervelocity\|_
{\Omega_{p} }^2
\| \frac{\mu_F}{K} \filtervelocity+\nabla \fluidpressure \|_
{\Omega_{p} }^2\\&
+\|2 \mu \mathbf{D}\left(\fluidvelocity\right) \mathbf{n} \cdot \mathbf{n}+\frac 1 2 (\operatorname{tr} \boldsymbol{\sigma}_f)+\frac{\mu_{F} d_{M}}{K_{\mathcal{M}}} \fluidvelocity \cdot \mathbf{n}+\fluidpressure \|_{\Gamma}^2
+\|  \fluidvelocity\cdot \mathbf{n}-\filtervelocity \cdot \mathbf{n} 
\|_{-\frac 1 2 , \Gamma}^2
\end{aligned}
\end{equation}
is minimized in $\mathbb H$.
Note that the condition $\fluidvelocity\cdot \boldsymbol{\tau}=0$ is imposed directly in the space $H^1_{\Gamma_D}(\Omega_f)^2$.
The underlying result for the success of the numerical method is the following continuity and ellipticity of the Least-Squares functional.

\begin{thm}
The Least-Squares functional defined in \eqref{eq:lsF} is continuous and elliptic in $\mathbb H$, equipped with the norm \begin{equation}
|||(\boldsymbol{\sigma}_f, {\bv}_f ,{\bv}_F,p_F )|||^2
:= \|\boldsymbol{\sigma}_f \|_{\text{div},\Omega_f}^2 + \|  {\bv}_f\|_{1,\Omega_f} ^2
+ \| {\bv}_p \|_{\text{div},\Omega_p}^2 + \|p_F \|_{1,\Omega_p}^2  \ .
\end{equation}
\end{thm}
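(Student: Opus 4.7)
The plan is to establish continuity and ellipticity separately in $\mathbb H$: continuity will follow from the triangle inequality together with standard trace theorems, while ellipticity decomposes into a stress--velocity Stokes analysis in $\Omega_f$, a Darcy $H(\operatorname{div})$--$H^1$ analysis in $\Omega_p$, and a coupling step through the two interface residuals.

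For continuity, each summand of $\mathcal F$ will be bounded by $|||(\stress,\fluidvelocity,\bv_p,\fluidpressure)|||^2$ using the elementary bounds $\|\nabla\cdot\stress\|_0\le\|\stress\|_{\operatorname{div},\Omega_f}$, $\|\operatorname{dev}\stress\|_0\le\|\stress\|_0$, $\|\mathbf D(\fluidvelocity)\|_0\le\|\fluidvelocity\|_{1,\Omega_f}$, and the analogous Darcy estimate. The interface integrals require traces: $\fluidvelocity|_\Gamma$ and $\fluidpressure|_\Gamma$ lie in $H^{1/2}$ via the $H^1$ trace theorem, while $\bv_p\cdot\mathbf n|_\Gamma\in H^{-1/2}(\Gamma)$ via the $H(\operatorname{div})$ normal-trace theorem, which provides the $-1/2$-norm bound on the normal jump. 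For the stress--pressure interface term, I would use the identity $(\stress\mathbf n)\cdot\mathbf n=(\operatorname{dev}\stress)\mathbf n\cdot\mathbf n+\tfrac{1}{d}\operatorname{tr}\stress$ to rewrite the integrand as the normal stress $(\stress\mathbf n)\cdot\mathbf n$ plus a piece controlled by $\fluidvelocity$ in $H^1$, and interpret the result in the appropriate dual pairing.

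For ellipticity I would first treat the two subdomains separately. In $\Omega_f$, combining $\|\operatorname{dev}\stress+2\mu\nabla\cdot\mathbf D(\fluidvelocity)\|_0^2$ with $\|\nabla\cdot\fluidvelocity\|_0^2$ forces $2\mu\mathbf D(\fluidvelocity)\approx\operatorname{dev}\stress$ up to the LS residual; Korn's inequality on the subspace of $H^1(\Omega_f)^2$ satisfying the essential condition $\fluidvelocity\cdot\boldsymbol\tau=0$ on $\Gamma$, together with the Dirichlet data, then promotes this $L^2$ control of $\mathbf D(\fluidvelocity)$ to the full $H^1$ norm of $\fluidvelocity$. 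The residual $\|\nabla\cdot\stress\|_0^2$, combined with the already obtained $L^2$ bound on $\operatorname{dev}\stress$ and the algebraic identity that recovers $\operatorname{tr}\stress$ from $\fluidvelocity$ and the LS residuals, then yields the $H(\operatorname{div},\Omega_f)$ bound on $\stress$; this is the standard stress--velocity LS Stokes argument as in \cite{bertrandinterface}. In $\Omega_p$, the Darcy residual $\|\tfrac{\mu_F}{K}\bv_p+\nabla\fluidpressure\|_0^2$ with the essential condition $\fluidpressure=p_F^0$ on $\Gamma_{F,D}$ yields $\|\fluidpressure\|_{1,\Omega_p}^2$ by Poincaré, which in turn returns $\|\bv_p\|_0^2$; adding $\|\nabla\cdot\bv_p\|_0^2$ completes the $H(\operatorname{div},\Omega_p)$ bound.

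Finally, the two subdomain estimates are tied together by the interface residuals: the $H^{-1/2}$ jump term is exactly the dual of the natural trace spaces of the subdomain variables, so summing the two estimates and absorbing the coupling contributions by Young's inequality delivers the global coercivity. The main obstacle I anticipate is the correct scaling of the Beavers--Joseph--Saffman contribution on $\Gamma$: the factor $\mu_F d_M/K_{\mathcal M}$ and the coefficient in front of $\fluidpressure$ must be weighted so that this interface term is dominated by the sum of interior residuals without spoiling coercivity, which is exactly the balancing of the Least-Squares terms highlighted in the abstract. A secondary subtlety is that the tangential no-slip condition is imposed in the space rather than in the functional, so Korn's inequality must be applied on the corresponding $\Gamma$-relatively-closed subspace of $H^1(\Omega_f)^2$, and a standard contradiction argument is required to rule out spurious rigid modes.
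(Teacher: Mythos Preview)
Your overall decomposition---continuity via trace theorems, ellipticity via a Stokes stress--velocity estimate in $\Omega_f$, a Darcy estimate in $\Omega_p$, and a coupling step on $\Gamma$---is the same skeleton the paper uses, and your continuity argument is fine. The difference is in the coupling step, and it is not merely cosmetic.

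You treat the two subdomain estimates as essentially self-contained (Korn plus the essential tangential condition on $\Gamma$ for $\mathbf v_f$, Poincar\'e for $p_F$) and then propose to ``absorb the coupling contributions by Young's inequality.'' The paper does the opposite: it makes the interface residuals do active work. It invokes Lemma~3.2 of \cite{steffengerharddarcyflow2011}, which shows that the two $\Gamma$-residuals, together with small slack terms $\eta\|\mathbf v_f\cdot\mathbf n\|_{0,\Gamma}$ and $\eta\|p_F\|_{0,\Gamma}$, dominate the specific duality pairings $-2\langle \boldsymbol\sigma_f\mathbf n\cdot\mathbf n,\ \mathbf v_f\cdot\mathbf n\rangle_\Gamma$ and $-2\langle p_F,\ \mathbf v_p\cdot\mathbf n\rangle_\Gamma$, plus a mean-value term $\tfrac{1}{\eta|\Gamma|}\big((1-\rho)(\int_\Gamma p\,ds)^2-\tfrac{1-\rho}{\rho}(\int_\Gamma\mathbf n\cdot(\boldsymbol\sigma_f\mathbf n)\,ds)^2\big)$. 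These pairings are exactly the boundary terms that appear when one integrates by parts in the subdomain functionals, so the lemma is then \emph{inserted into} the subdomain lower bounds rather than added on afterwards.

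Your route can be completed, but the phrase ``absorb by Young's inequality'' hides the only genuinely delicate point: the Beavers--Joseph--Saffman coefficient $\mu_F d_M/K_{\mathcal M}$ sits in front of a signed cross term on $\Gamma$, and the subdomain Stokes residuals alone leave the mean of $\operatorname{tr}\boldsymbol\sigma_f$ (equivalently the hydrostatic pressure constant) undetermined. In the paper's argument both issues are handled simultaneously by the cited lemma---the cross term is converted into the needed pairing and the $(\int_\Gamma p\,ds)^2$ contribution fixes the remaining constant. If you keep your modular presentation, you should state explicitly how the pressure constant is pinned down and verify the sign when the BJS term is expanded; otherwise the ``standard stress--velocity LS Stokes argument'' you invoke does not close on $\Omega_f$ by itself.
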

\begin{proof}
The proof is similar to the one in \cite{bertrandinterface},  \cite{steffendarcyflow2015} and \cite{steffengerharddarcyflow2011}. Special care is needed for the term $\|2 \mu \mathbf{D}\left(\fluidvelocity\right) \mathbf{n} \cdot \mathbf{n}+\frac 1 2 (\operatorname{tr} \boldsymbol{\sigma}_f)+\frac{\mu_{F} d_{M}}{K_{\mathcal{M}}} \fluidvelocity \cdot \mathbf{n}+\fluidpressure \|_{\Gamma}^2$ in the ellipticity proof. In fact, it follows from \cite{steffengerharddarcyflow2011} Lemma 3.2. that
\begin{equation}\begin{aligned}
\frac{1}{\eta}&
\left(\|2 \mu \mathbf{D}\left(\fluidvelocity\right) \mathbf{n} \cdot \mathbf{n}+\frac 1 2 (\operatorname{tr} \boldsymbol{\sigma}_f)+\frac{\mu_{F} d_{M}}{K_{\mathcal{M}}} \fluidvelocity \cdot \mathbf{n}+\fluidpressure \|_{\Gamma}^2
+\|  \fluidvelocity\cdot \mathbf{n}-\filtervelocity \cdot \mathbf{n} 
\|_{-\frac 1 2 , \Gamma}^2
\right)
+\eta \|  \fluidvelocity\cdot \mathbf{n} \|_{0,\Gamma}+\eta \| p_F \|_{0,\Gamma}\\ \geq &
- 2 \langle 2 \mu \mathbf{D}\left(\fluidvelocity\right) \mathbf{n} \cdot \mathbf{n}+\frac 1 2 (\operatorname{tr} \boldsymbol{\sigma}_f),\frac{\mu_{F} d_{M}}{K_{\mathcal{M}}} \fluidvelocity \cdot \mathbf{n} \rangle_{0,\Gamma}
- 2 \langle p_F,\filtervelocity \cdot \mathbf{n}_{0,\Gamma} \rangle\\&+\frac{1}{\eta|\Gamma|}\left((1-\rho)\left(\int_{\Gamma} p d s\right)^{2}-\frac{1-\rho}{\rho}\left(\int_{\Gamma} \mathbf{n} \cdot(\boldsymbol{\sigma_f} \cdot \mathbf{n}) d s\right)^{2}\right)\end{aligned}
\end{equation}
for any sufficiently small $\eta>0$ and $\rho \in (0,1)$. This can be inserted in the lower bounds for the two functional of the domains $\Omega_p$ and $\Omega_f$ and completes the proof.
\end{proof}
A further major advantage of this result is the inherent error estimator for any conforming discretization, in particular for discretization in the space
\begin{equation}
\mathbb H _h :=
(RT^{k-1}(\Omega_f))^2 \times
P^k(\Omega_f)^2  \times
 RT^{k-1}(\Omega_p) \times
P^k(\Omega_p)
\end{equation}
for any $k \geq 1$, as considered in the next section.

\section{Numerical Results}
This section is concerned with the validation of the Least-Squares approach by numerical results. The first numerical example describes a capillary of length 1 mm before the bifurcation and surrounded by the tissue of the capillary bed,  in analogy to \cite{Koch2014CouplingAV}. Since the different terms of the Least-Squares Functional have different scaling, the domain is rescaled such that $L=2$ and an auxiliary solution is computed. A simple mapping transforms the solution to the original domain back. The boundary conditions are chosen from  \cite{Koch2014CouplingAV}: the blood is flowing in the capillary through $\Gamma_{D,f,1}$ where the effective pressure is set to $400$ Pa to $\Gamma_{D,f,2x}$ and $\Gamma_{D,f,3}$  where it is set to $-1600$  Pa. On $\Gamma_{D,F}$, the effective pressure $p_F$ is set to $-933$ Pa. The thickness of the vessel wall $\varepsilon$ is set to $6.0 \cdot 10^{-7}$m. The second numerical example is constructed in a similar way, but takes the structure of the liver lobule into account. In both cases, the results are shown in figure \ref{fig:3} and confirm the convergence of the method. The Least-Squares functional as an error estimator leads to a mesh refinement around the interface as expected. 
\begin{figure}
\begin{minipage}{52mm}
\includegraphics[width=42mm]{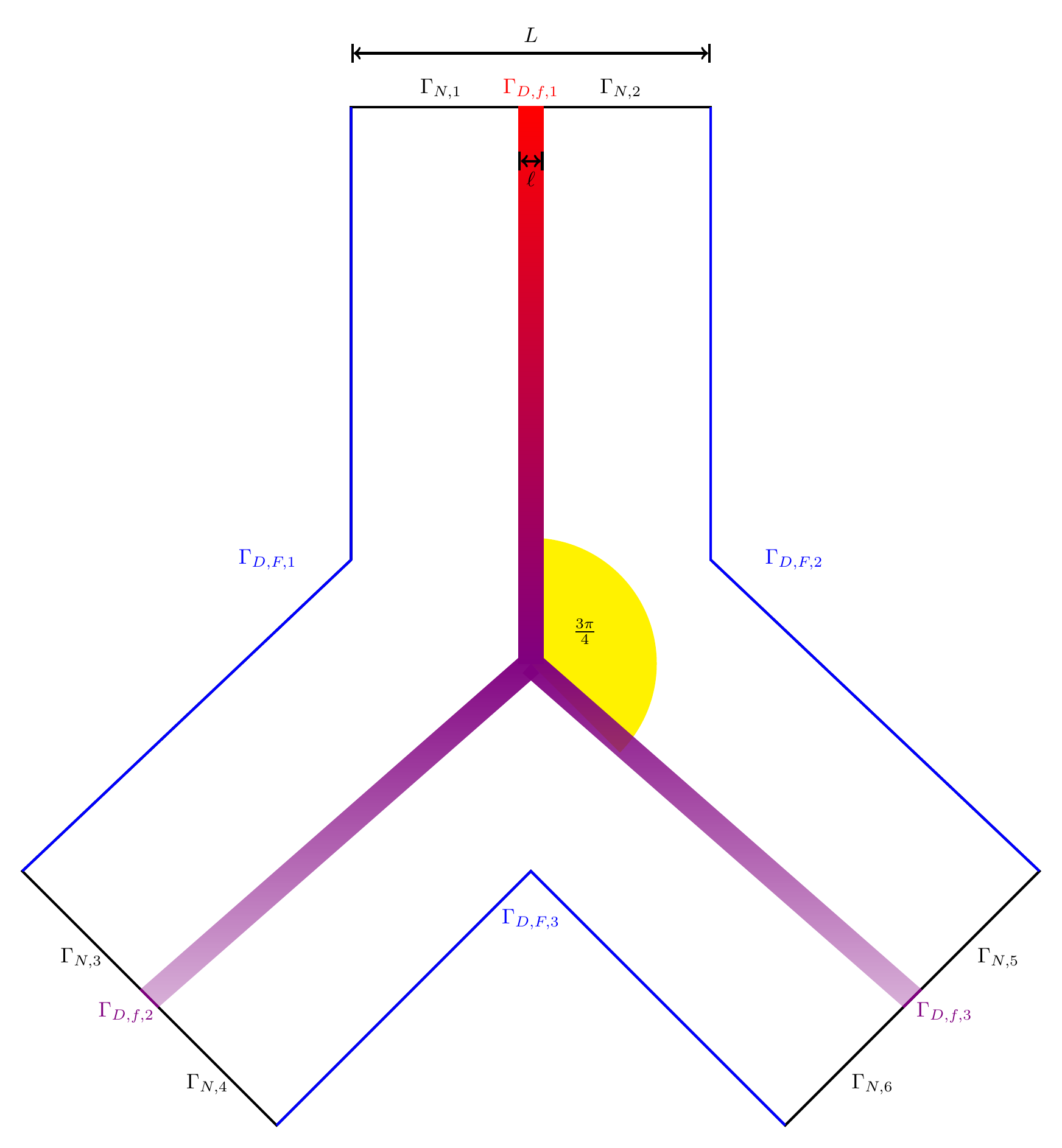}
\caption{Geometry for the bifurcation case.}
\label{fig:8}
\end{minipage}
\begin{minipage}{61mm}
\includegraphics[width=\linewidth]{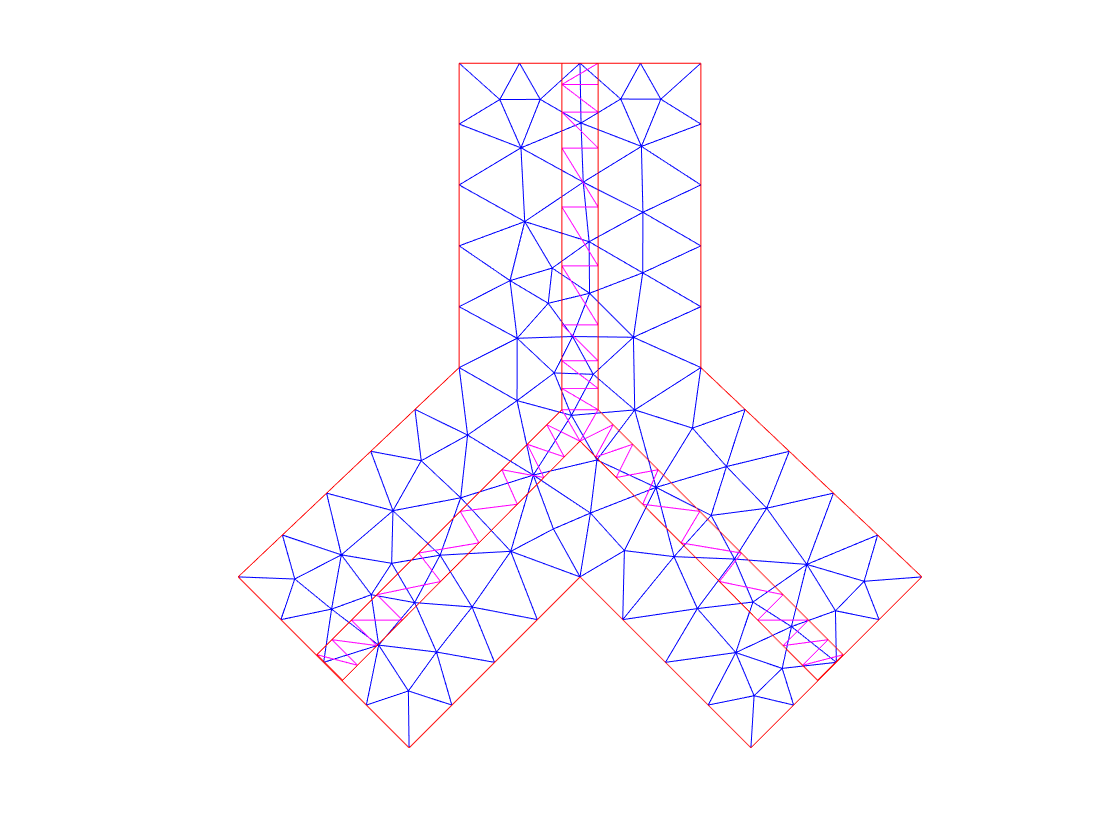}
\caption{Initial meshes for $\Omega_f$ and $\Omega_p$}
\label{fig:3}
\end{minipage}
\begin{minipage}{61mm}
\includegraphics[width=\linewidth]{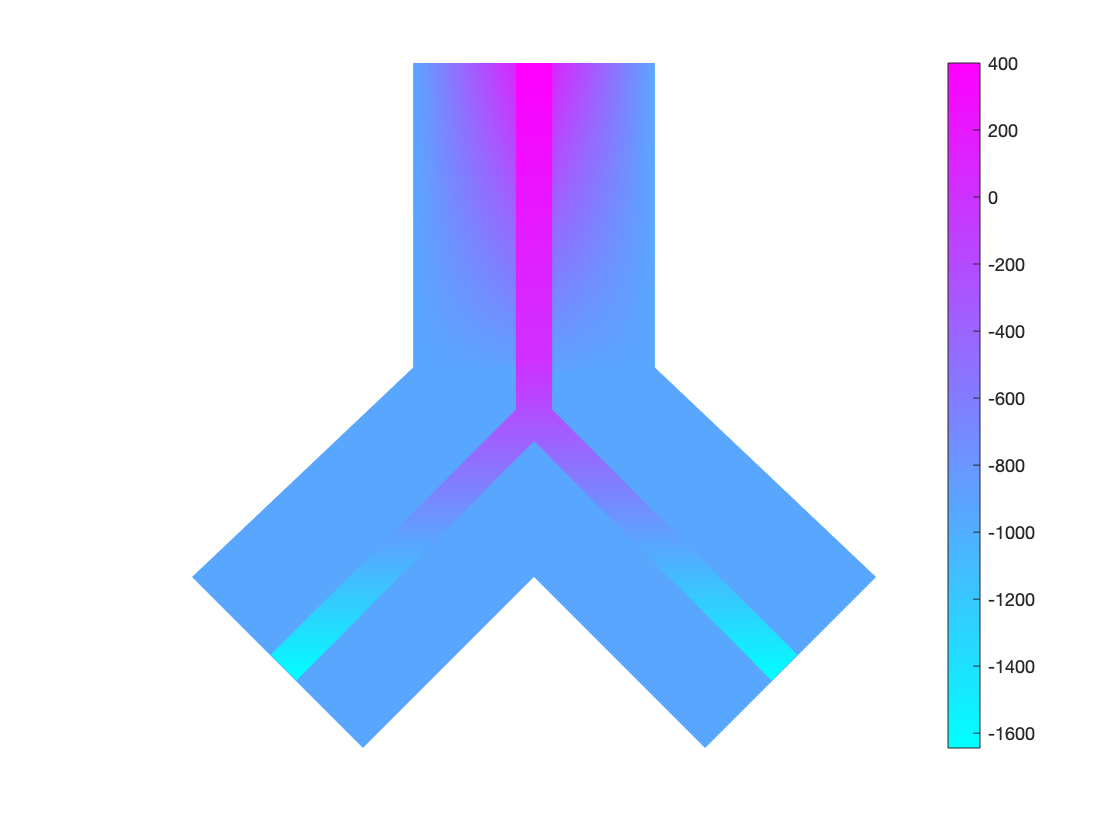}
\caption{Effective pressure 
}
\label{fig:4}
\end{minipage}
\end{figure}
\begin{figure}
\hfil
\begin{minipage}{72mm}
\includegraphics[width=\linewidth]{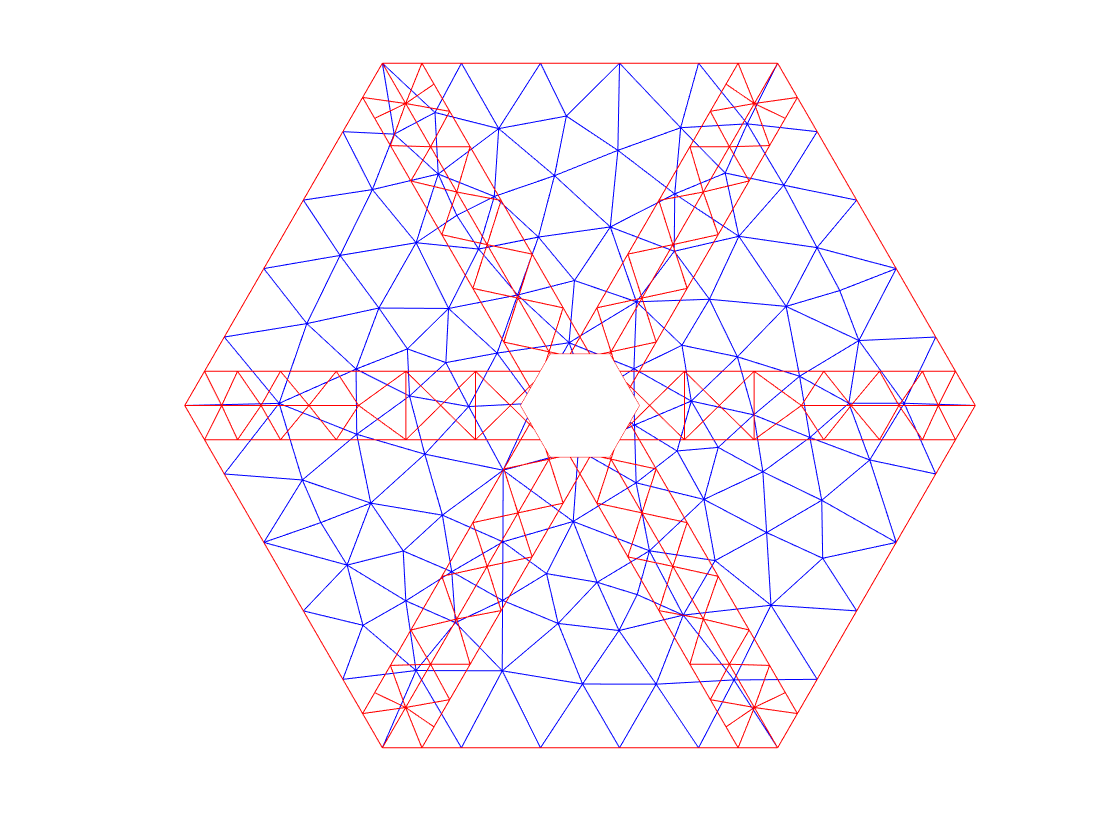}
\caption{Initial meshes for $\Omega_f$ and $\Omega_p$ representing the hexagonal-shaped liver lobule and the sinusoids therein.}
\label{fig:3}
\end{minipage}
\hfil
\begin{minipage}{72mm}
\includegraphics[width=\linewidth]{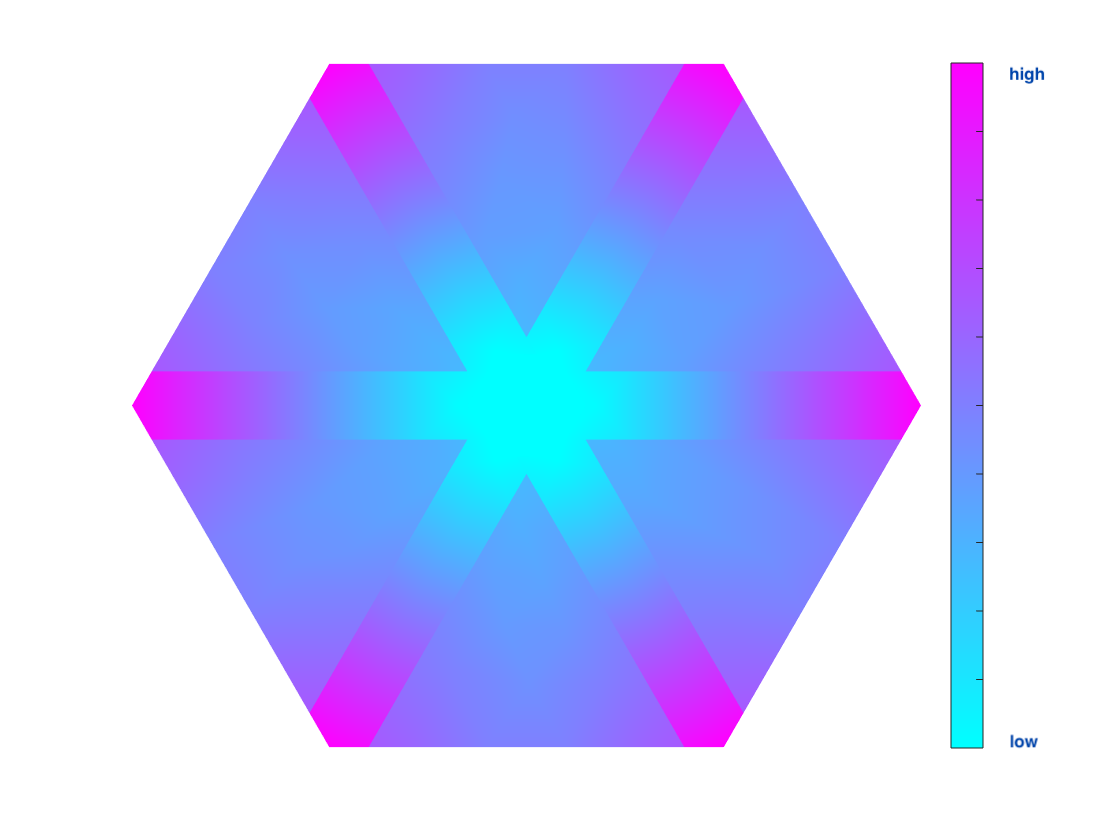}
\caption{Effective pressure in the liver lobule with periportal inflow and outflow at the central vein.}
\label{fig:4}
\end{minipage}
\end{figure}
Acknowledgement:
  Funded by Deutsche Forschungsgemeinschaft (DFG, German Research Foundation) under Germany's Excellence Strategy – EXC 2075 – 390740016

\def\bstname{pamm}

%

\begin{thebibliography}{1}

%




\bibitem{baber2014}
K.~Baber, Coupling free flow and flow in porous media in biological and technical applications: From a simple to a complex interface description. Institut f\"ur Wasser und Umweltsystemmodellierung Mitteilungen, (2014).

\bibitem{bertrandinterface}
F. Bertrand, First-Order System Least-Squares for Interface Problems. SIAM Journal on Numerical Analysis 56.3 (2018).

\bibitem{LS}
P. Bochev and M. Gunzburger, Least-squares finite element methods. Vol. 166. Springer Science \& Business Media, 2009.

\bibitem{Christ2010}
B. Christ, U. Dahmen, K.-H. Herrmann, M. K{\"o}nig, J. Reichenbach, T. Ricken, el al., Computational Modeling in Liver Surgery, Frontiers in Physiology 8 (2017)

\bibitem{Discacciati}
M. Discacciati, and A. Quarteroni. Navier-Stokes/Darcy coupling: modeling, analysis, and numerical approximation. Rev. Mat. Complut 22.2 (2009)

\bibitem{Formaggia2009a}
L.~Formaggia, A. M.~Quarteroni and A.~Veneziani,  Cardiovascular mathematics. Number CMCS-BOOK-2009-001. Springer, (2009a).

\bibitem{Ehlers}
W. Ehlers and A.  Wagner, Constitutive and computational aspects in tumor therapies of multiphasic brain tissue. In Computer Models in Biomechanics. Springer, Dordrecht. (2013)

\bibitem{EhlersBlum}
W. Ehlers and J. Bluhm  Porous media: theory, experiments and numerical applications. Springer. (2002).

\bibitem{Erbertseder}
K. Erbertseder, J. Reichold, B. Flemisch, P.  Jenny and R. Helmig, A coupled discrete/continuum model for describing cancer-therapeutic transport in the lung. (2012)

\bibitem{CardiovascularMathematics}
L. Formaggia, A. Quarteroni, and A. Veneziani, Cardiovascular Mathematics: Modeling and simulation of the circulatory system (Vol. 1). Springer Science \& Business Media.

\bibitem{Koch2014CouplingAV}
T.~Koch, Coupling a vascular graph model and the surrounding tissue to simulate flow processes in vascular networks, (2014)

\bibitem{steffendarcyflow2015}%
S. Münzenmaier, 
First‐Order system least squares for generalized‐Newtonian coupled Stokes‐Darcy flow. Numerical Methods for Partial Differential Equations, (2015)

\bibitem{steffengerharddarcyflow2011}%
S. Münzenmaier, and G. Starke,
First‐order system least squares for coupled Stokes‐Darcy flow, SIAM J Numer Anal 49 (2011).

\bibitem{Ricken2010}%
 T. Ricken, U. Dahmen, and O. Dirsch, A biphasic model for sinusoidal liver perfusion remodeling after outflow obstruction, Biomech Model Mechanobiol. 9 (2010)
 
 
\bibitem{Ricken2015}%
T. Ricken, D. Werner, H. G. Holzhütter, M.König, U. Dahmen, and O. Dirsch,
Modeling function-perfusion behavior in liver lobules including tissue, blood, glucose, lactate and glycogen by use of a coupled two-scale PDE-ODE approach, Biomech Model Mechanobiol 14 (2011).








\end{thebibliography}
%

\end{document}